\newtheorem{theorem}{\textbf{Theorem}}
\newtheorem{lemma}{\textbf{Lemma}}
\theoremstyle{remark}
\numberwithin{equation}{section}
\begin{document}

	\title[$(q,k)$-Generalized Fibonacci Numbers]{On the Sequences of $(q,k)$-Generalized Fibonacci Numbers}
	%    Information for first author
	%    \thanks will become a 1st page footnote.
	
	\author{Gersica Freitas}
	\address{Campus de Engenharia e Ciências Agrárias, Universidade Federal de Alagoas, Rio Largo - AL, Brazil}
	\email{gersica.freitas@ceca.ufal.br}
	
	\author{Alessandra Kreutz}
	\address{Instituto Federal de Brasília, Campus Taguatinga, Brasília - DF, Brazil}
	\email{alessandra.kreutz@ifb.edu.br}
	
	\author{Jean Lelis}
	\address{Faculdade de Matemática/ICEN/UFPA, Belém - PA, Brazil.}
	\email{jeanlelis@ufpa.br}
	
	\author{Elaine Silva}
	\address{Instituto de Matemática, Universidade Federal de Alagoas, Maceió - AL, Brazil}
	\email{elaine.silva@im.ufal.br}

	%    Information for second author

	%    General info
	\subjclass[2020]{11B37, 11B39}
	
	\keywords{generalized Fibonacci number; generalized Pell number; recurrence sequence; Binet-style formula.}
	
	\begin{abstract}
		In this paper, we consider the new family of recurrence sequences of $(q,k)$-generalized Fibonacci numbers. These sequences naturally extend the well-known sequences of $k$-generalized Fibonacci numbers and generalized $k$-order Pell numbers. We shall obtain a Binet-style formula and study the asymptotic behavior of dominant root of characteristic equation. Moreover, we shall prove some auxiliary results about these sequences. In particular, we characterize the first $(q,k)$-generalized Fibonacci numbers in terms of binary sequences. 
		
	\end{abstract}
	
	\maketitle
	
	%%  SECTION 1
	\section{Introduction}
	
	Recurrence sequences are often the subject of study in the literature, for instance, the Fibonacci sequence and its generalizations have been widely studied due to 
	%especially the well-known sequence of Fibonacci numbers and its generalizations. These recurrence sequences have
	interesting results about their \textit{Binet-style} formula and their asymptotic behavior.
	The study of recurrence sequences has implications in many areas of research such as Diophantine equations, combinatorial problems and others. 
	
    The aim of this paper is to prove some properties associated with a new family of generalized recurrence sequences named \textit{sequences of $(q,k)$-generalized Fibonacci numbers} or simply \textit{$(q,k)$-bonacci numbers} which is given recurrently by 
		\begin{equation}\label{eq04}
			F_{q,n}^{(k)}=qF_{q,n-1}^{(k)}+F_{q,n-2}^{(k)}+\cdots+F_{q,n-k}^{(k)}\qquad\forall\quad n\geq 2
		\end{equation}
		with the $k$ initial conditions given by $F_{q,-(k-2)}^{(k)}=F_{q,-(k-3)}^{(k)}=\cdots =F_{q,0}^{(k)}=0$ and $F_{q,1}^{(k)}=1$.
	
	%Introduce such generalization and basic properties about its behavior are the first step to prove new results  about Diophantine equations involving these sequences.
	First of all, let us note that the family of the sequences of $(q,k)$-generalized Fibonacci numbers extends well-known recurrence sequences.
	
	For $q=1$ and $k=2$ we have the sequence of Fibonacci numbers $(F_n)_{n\geq 0}$ defined recurrencely by $F_{n+1}=F_n+F_{n-1}$ with initial conditions $F_0=0$ and $F_1=1$. Such sequence have been studied and generalized by many authors, see \cite{bravo,dresden,koshy,miller}. Some of these generalizations have received greater prominence over the years. Among these, it is worth mentioning here that the usually called sequences of $k$-generalized Fibonacci numbers or $k$-bonacci number occur here when $q=1$ and $k\geq 2$, and satisfy the recurrence relation
		\begin{equation}\label{eq01}
			F_{n}^{(k)}=F_{n-1}^{(k)}+\cdots + F_{n-k}^{(k)}, \ \forall \ n\geq 2,
		\end{equation}
		with the $k$ initial conditions given by $F_{-(k-2)}^{(k)}=F_{-(k-3)}^{(k)}=\cdots =F_0^{(k)}=0$ and $F_1^{(k)}=1$.
		
		Another generalization of Fibonacci numbers is the so-called sequence of $q$-Fibonacci numbers or $q$-weight Fibonacci numbers that appear for $q\geq1$ and $k=2$ in \eqref{eq04}. These sequences are defined by
	\begin{equation}\label{eq02}
		F_{q,n}=qF_{q,n-1}+F_{q,n-2}\qquad\forall\quad  n\geq 2 %n\geq1
	\end{equation}
	with initial conditions $F_{q,0}=0$ and $F_{q,1}=1$ (see \cite{catarino,falcon}). Probably, the best known example of these sequences is the sequence of Pell numbers given by the above recurrence taking $q=2$.
	
	In a recent paper, Kili\c{c} and Ta\c{s}ci \cite{kilicc2006generalized} studied the sequences of order-$k$ Pell numbers given by 
	\begin{equation}\label{eq03}
			P^{(k)}_{n}=2P^{(k)}_{n-1}+P^{(k)}_{n-2}+\cdots+P^{(k)}_{n-k}\qquad\forall\quad n\geq 2
		\end{equation}
		with $P_{-(k-2)}^{(k)}=P_{-(k-3)}^{(k)}=\cdots =P_0^{(k)}=0$ and $P_1^{(k)}=1$.
	Note that in \eqref{eq04} with $q=2$ and $k\geq 3$, we have this $k$-Pell sequence.
	
	In \cite{bravo2020combinatorial}, Bravo, Herrera and Ramírez presented some combinatorial interpretations for these $k$-Pell numbers while in \cite{bravo2021generalization}, Bravo, Herrera and Luca determined a Binet-style formula for these sequences and proved several results on the asymptotic behavior of the dominant root of their characteristic equation. Furthermore, as shown in \cite{bravo2021generalization}, the sequences of order-$k$ Pell numbers have a good asymptotic behavior, as well as the sequences of $k$-generalized Fibonacci numbers. 
	
		Besides that, in the case $q=3$, we have the firsts terms of the sequences of $(3,k)$-generalized Fibonacci numbers are given by
	\begin{eqnarray*}
		\begin{tabular}{c|cccccccccc}
			\rule[-1ex]{0pt}{2.5ex} \diagbox{k}{n} & 1 & 2 & 3 & 4 & 5 & 6 & 7 & 8 & 9 & \ldots \\
			\hline
			\rule[-1ex]{0pt}{2.5ex} 2 & 1 & 3 & 10 & 33 & 109 & 360 & 1189 & 3927 & 12970 &  \ldots \\
			\hline
			\rule[-1ex]{0pt}{2.5ex} 3 & 1 & 3 & 10 & 34 & 115 & 389 & 1316 & 4452 & 15061 &  \ldots \\
			\hline
			\rule[-1ex]{0pt}{2.5ex} 4 & 1 & 3 & 10 & 34 & 116 & 395 & 1345 & 4580 & 15596 &\ldots \\
			\hline
			\rule[-1ex]{0pt}{2.5ex} 5 & 1 & 3 & 10 & 34 & 116 & 396 & 1351 & 4609 & 15724 &\ldots \\
		\end{tabular}
	\end{eqnarray*}
	and, for $q=4$, we have the list of the first terms of the sequences of  $(4,k)$-generalized Fibonacci numbers given by
	\begin{eqnarray*}
		\begin{tabular}{c|cccccccccc}
			\rule[-1ex]{0pt}{2.5ex} \diagbox{k}{n} & 1 & 2 & 3 & 4 & 5 & 6 & 7 & 8 & 9 & \ldots \\
			\hline
			\rule[-1ex]{0pt}{2.5ex} 2 & 1 & 4 & 17 & 72 & 305 & 1292 & 5473 & 23184 & 98209 & \ldots \\
			\hline
			\rule[-1ex]{0pt}{2.5ex} 3 & 1 & 4 & 17 & 73 & 313 & 1342 & 5754 & 24671 & 105780 & \ldots \\
			\hline
			\rule[-1ex]{0pt}{2.5ex} 4 & 1 & 4 & 17 & 73 & 314 & 1350 & 5804 & 24953 & 107280 & \ldots \\
				\hline
			\rule[-1ex]{0pt}{2.5ex} 5 & 1 & 4 & 17 & 73 & 314 & 1351 & 5812 & 25003 & 132565 & \ldots \\
		\end{tabular}
	\end{eqnarray*}
	
    Since the definition of $(q,k)$-generalized Fibonacci sequence and basic properties about its behavior are the first step to prove new results  about Diophantine equations involving these sequences, the main goal of this paper is to extend the main results on asymptotic behavior to the sequences defined in \eqref{eq04}.  So, we have the following main theorem that we shall prove in the section 3.

	\begin{theorem}[Main Theorem]\label{theo1}
		Let $(F^{(k)}_{q,n})$ be the sequence of $(q,k)$-generalized Fibonacci numbers with $k\geq2$, $q\geq3$ and $n\geq2-k$. Then
		\begin{enumerate}[(a)]
			\item
			\[
			F_{q,n}^{(k)}=\sum_{i=1}^kg_{q,k}(\gamma_i)\gamma_i^n,
			\]
			where $\gamma_1,\gamma_2,\ldots,\gamma_k$ are roots of characteristic polynomial $\Phi_{q,k}(t)$, given by
			\[
			\Phi_{q,k}(t)=t^k-qt^{k-1}-t^{k-2}-\cdots-t-1
			\]
			and
			\[
			g_{q,k}(x):=\frac{x-1}{(k+1)x^2-(q+1)kx+(q-1)(k-1)};
			\]
			\item
			\[
    			|F^{(k)}_{q,n}-g_{q,k}(\gamma)\gamma^n|\leq\frac{1}{q},
			\]
		     where $\gamma$ is the dominant root of $\Phi_{q,k}(t)$. Moreover,
			\begin{equation*}
				\gamma^{n-2}<\gamma^{n-1}\left(\frac{q-1}{q}\right)<F_{q,n}^{(k)}<\gamma^{n-1}\left(\frac{q+2}{q}\right)<\gamma^{n},
			\end{equation*}
			for all $n\geq1$.
		\end{enumerate}
	\end{theorem}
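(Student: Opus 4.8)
The plan for Theorem 1 is to follow the standard route for linear recurrences with a dominant simple root, adapting the arguments of Bravo--Herrera--Luca for order-$k$ Pell numbers. The first task is to control the characteristic polynomial $\Phi_{q,k}(t)=t^k-qt^{k-1}-t^{k-2}-\cdots-t-1$. I would multiply by $(t-1)$ to obtain the factorization $(t-1)\Phi_{q,k}(t)=t^{k+1}-(q+1)t^k+(q-1)t^{k-1}+\cdots$, or more cleanly write $\Phi_{q,k}(t)=0$ in the form $t^k(t-1-q) = -t^{k-1}-\cdots-t-1$, equivalently, after summing the geometric series, $\Phi_{q,k}(t)\cdot(t-1) = t^{k+1}-(q+1)t^k+(q-1)$. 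Using this I would show: (i) $\Phi_{q,k}$ has a unique real root $\gamma>1$, and in fact $q<\gamma<q+1$ (evaluate $\Phi_{q,k}$ at $t=q$ and $t=q+1$); (ii) all other roots $\gamma_2,\dots,\gamma_k$ lie strictly inside the unit disk — this is the key separation estimate and I would get it by a Rouché-type argument applied to $t^{k+1}-(q+1)t^k+(q-1)$ on $|t|=1$, or by the classical lemma (used in the $k$-Fibonacci and $k$-Pell literature) that a polynomial $t^k - q t^{k-1} - \cdots - 1$ with $q\ge 1$ has exactly one root outside the closed unit disk; (iii) the roots are simple, so in particular $\Phi_{q,k}'(\gamma)\neq 0$.

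Next I would establish part (a). Since the roots are simple, $F_{q,n}^{(k)} = \sum_{i=1}^k c_i \gamma_i^n$ for constants $c_i$ determined by the $k$ initial conditions $F_{q,-(k-2)}^{(k)}=\cdots=F_{q,0}^{(k)}=0$, $F_{q,1}^{(k)}=1$. The claim is that $c_i = g_{q,k}(\gamma_i)$ with the stated rational function. I would verify this either by the partial-fraction expansion of the generating function $\sum_{n} F_{q,n}^{(k)} z^n$, whose denominator is $1 - qz - z^2 - \cdots - z^k = z^k \Phi_{q,k}(1/z)$ up to normalization, or — more transparently — by checking directly that $x-1$ over the derivative-type quantity $(k+1)x^2-(q+1)kx+(q-1)(k-1)$ equals $1/(\text{something})$ times $\Phi_{q,k}'(x)$ times a factor coming from the initial data; concretely, differentiating the identity $(t-1)\Phi_{q,k}(t) = t^{k+1}-(q+1)t^k+(q-1)$ at $t=\gamma_i$ (where $\Phi_{q,k}(\gamma_i)=0$) gives $(\gamma_i-1)\Phi_{q,k}'(\gamma_i) = (k+1)\gamma_i^k - (q+1)k\gamma_i^{k-1}$, and one massages this together with $\gamma_i^k = q\gamma_i^{k-1}+\cdots+1$ to produce exactly the denominator of $g_{q,k}$ after dividing by $\gamma_i^{k-2}$. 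This is a routine but slightly fiddly computation; I would present it as a short lemma.

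For part (b), write $F_{q,n}^{(k)} - g_{q,k}(\gamma)\gamma^n = \sum_{i=2}^k g_{q,k}(\gamma_i)\gamma_i^n$. Since each $|\gamma_i|<1$ for $i\ge 2$ and the coefficients $g_{q,k}(\gamma_i)$ are bounded (the denominator of $g_{q,k}$ does not vanish at the $\gamma_i$ because the roots are simple, and one can bound it away from $0$ uniformly), the sum is bounded; to get the sharp bound $1/q$ I would argue that for $n\ge 2-k$ the quantity $\sum_{i=2}^k g_{q,k}(\gamma_i)\gamma_i^n$ is largest in absolute value at a small value of $n$ and then either (a) evaluate at the initial conditions — e.g. at $n=0$ the left side is $0 - g_{q,k}(\gamma)\cdot 1 = -g_{q,k}(\gamma)$, and $g_{q,k}(\gamma)\le 1/q$ needs to be checked using $q<\gamma<q+1$ — or (b) use the recurrence to push the estimate to all $n\ge 1$ by induction once a base range is controlled. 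For the final chain of inequalities $\gamma^{n-2}<\gamma^{n-1}\frac{q-1}{q}<F_{q,n}^{(k)}<\gamma^{n-1}\frac{q+2}{q}<\gamma^n$, I would combine the $1/q$-approximation with two facts: $\gamma^{n}=g_{q,k}(\gamma)\gamma^n\cdot\frac{\gamma^n}{g_{q,k}(\gamma)\gamma^n}$ is not quite the right framing — rather, I'd use $F_{q,n}^{(k)} = g_{q,k}(\gamma)\gamma^n + O(1/q)$, the bounds $\frac{1}{q+1}<g_{q,k}(\gamma)<\frac{1}{q}$ (to be proven from the location of $\gamma$), and $\gamma>q\ge 3$ so that $\gamma^{n-1}/q$ dominates the error term $1/q$; the outer inequalities $\gamma^{n-2}<\gamma^{n-1}\frac{q-1}{q}$ and $\gamma^{n-1}\frac{q+2}{q}<\gamma^n$ reduce to $\gamma>\frac{q}{q-1}$ and $\gamma>q+2$... which is false, so in fact the right outer bound must instead use $\gamma>\frac{q+2}{?}$ — I would recheck the constants, but the mechanism is: $F_{q,n}^{(k)}$ is squeezed between $\gamma^{n-1}$ times explicit constants slightly less than and slightly more than $g_{q,k}(\gamma)\gamma$, and since $q<\gamma<q+1$ these constants can be taken as $\frac{q-1}{q}$ and $\frac{q+2}{q}$.

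The main obstacle I expect is the root-separation step (ii): proving cleanly that $\Phi_{q,k}$ has exactly one root outside the unit circle and all others strictly inside, with enough uniformity in $q$ and $k$ to make the error term in (b) genuinely $\le 1/q$ rather than merely $O(1)$. The hypothesis $q\ge 3$ (rather than $q\ge 1$) is presumably exactly what makes the constant $1/q$ come out, so the delicate part is tracking that this bound survives the summation over the $k-1$ subdominant roots; I would handle it by first proving a crude bound ($<1$, say) for all $n$ in a base range using the Vandermonde/generating-function expression, and then propagating it to all $n$ via the defining recurrence, since $q\cdot(\text{small}) + (k-1)\cdot(\text{small})$ can still be kept small when the leading coefficient $q$ multiplies a term that is itself decaying.
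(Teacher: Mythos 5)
Your part (a) plan is essentially the paper's: identify the coefficients in $F^{(k)}_{q,n}=\sum_i c_i\gamma_i^n$ via simplicity of the roots (the paper cites Kalman's formula $c_i=1/\Phi'_{q,k}(\gamma_i)$ up to the shift in initial conditions) and then rewrite $\Phi'_{q,k}(\gamma_i)$ by differentiating $h_{q,k}(t)=(t-1)\Phi_{q,k}(t)$. But note your factorization is wrong as stated: $(t-1)\Phi_{q,k}(t)=t^{k+1}-(q+1)t^k+(q-1)t^{k-1}+1$, not $t^{k+1}-(q+1)t^k+(q-1)$; with the correct identity one gets $(\gamma_i-1)\Phi'_{q,k}(\gamma_i)=(k+1)\gamma_i^{k}-(q+1)k\gamma_i^{k-1}+(q-1)(k-1)\gamma_i^{k-2}$, whose extra last term is exactly what produces the denominator of $g_{q,k}$ after dividing by $\gamma_i^{k-2}$; your sketched derivative omits it. This is fixable, as is your arithmetic slip at the end (the outer inequality needs only $\gamma>(q+2)/q$, not $\gamma>q+2$). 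The root-separation input you worry about is handled in the paper by quoting Wu--Zhang: for $q\geq 3$ the polynomial has one root in $(q,q+1)$ and all others inside the unit circle.

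The genuine gap is in part (b), in two places. First, you have no workable mechanism to obtain $|E^{(k)}_{q,n}|\leq 1/q$ for \emph{all} $n$: termwise bounding of $\sum_{i\geq2}|g_{q,k}(\gamma_i)|\,|\gamma_i|^n$ does not yield $1/q$ (nor even an obvious uniform-in-$k$ constant), and the forward induction you propose fails, since from $|E^{(k)}_{q,n-j}|\leq 1/q$ the recurrence only gives $|E^{(k)}_{q,n}|\leq q\cdot\tfrac1q+(k-1)\tfrac1q=1+\tfrac{k-1}{q}$, so the bound is not self-propagating; your remark that the term multiplied by $q$ ``is itself decaying'' is not something the induction hypothesis provides. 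The paper's actual argument is different in kind: after verifying the bound directly on the base range $2-k\leq n\leq 1$ (using $\tfrac{1}{q+1}<g_{q,k}(\gamma)<\tfrac1q$), it assumes a minimal $n_0\geq2$ with $|E^{(k)}_{q,n_0}|>1/q$ and uses the order-$(k+1)$ recurrence $E^{(k)}_{q,n}=(q+1)E^{(k)}_{q,n-1}-(q-1)E^{(k)}_{q,n-2}-E^{(k)}_{q,n-k-1}$ to show $|E^{(k)}_{q,n}|$ would then increase strictly forever, contradicting $E^{(k)}_{q,n}\to0$. Second, you treat $\tfrac{1}{q+1}<g_{q,k}(\gamma)<\tfrac1q$ as a routine consequence of $q<\gamma<q+1$, but it is the technical heart of the paper: the vertical asymptote $c_{q,k}$ of $g_{q,k}$ lies very close to $\gamma$, so one needs the sharper localization $\alpha_q(1-q^{-k})<\gamma<\alpha_q$ (with $\alpha_q$ the dominant root of $t^2-(q+1)t+(q-1)$), the monotonicity of $g_{q,k}$ on $(c_{q,k},\infty)$, and a case analysis ($k=2$, $3\leq k\leq q$, $k\geq q+1$) comparing $c_{q,k}$ with $\alpha_q(1-q^{-k})$. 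Without these two ingredients your outline does not reach the stated $1/q$ bound.
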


         We remark that the case $q=1$ of this theorem was proved by Dresden and Du in \cite{dresden} and the case $q=2$ was proved by Bravo, Herrera and Luca in \cite{bravo2021generalization}. Therefore, in this paper we shall consider the case $q\geq3$.
	
	\section{Preliminary Results}
	
	For a detailed study of the sequences of  $(q,k)$-generalized Fibonacci numbers, $(F^{(k)}_{q,n}):=(F^{(k)}_{q,n})_{n\geq -(k-2)}$, we consider the characteristic polynomial defined by  
	\begin{equation}\label{eq05}
		\Phi_{q,k}(t)=t^k-qt^{k-1}-t^{k-2}-\cdots-t-1
	\end{equation}
	and the auxiliary function
	\begin{equation}\label{eq06}
		h_{q,k}(t)=(t-1)\Phi_{q,k}(t)=t^{k+1}-(q+1)t^{k}+(q-1)t^{k-1}+1.
	\end{equation}

	Since ($F^{(k)}_{q,n})$ is a linear recurrence of order $k$ with characteristic polynomial given by \eqref{eq05} and this polynomial divides the auxiliary function \eqref{eq06}, we deduce that $(F^{(k)}_{q,n})$ is also a linear recurrence of order $k+1$ with characteristic polynomial $h_{q,k}(t)$. Hence, we obtain our first identity involving the sequences of  $(q,k)$-generalized Fibonacci number given by the following theorem.
	
	\begin{theorem}\label{theo2}
		Let $k\geq 2$ and $q\geq 3$ be integer number. Then
		\begin{equation}\label{eq07}
			F^{(k)}_{q,n}=(q+1)F^{(k)}_{q,n-1}-(q-1)F^{(k)}_{q,n-2}-F^{(k)}_{q,n-k-1}\qquad \mbox{for all}\quad  n\geq 3.
		\end{equation}
	\end{theorem}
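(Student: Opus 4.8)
The plan is to derive \eqref{eq07} directly from the defining recurrence \eqref{eq04} by a telescoping argument. Conceptually, the identity is just the statement that, since $\Phi_{q,k}(t)$ annihilates the sequence and $\Phi_{q,k}(t)\mid h_{q,k}(t)=(t-1)\Phi_{q,k}(t)$, the polynomial $h_{q,k}(t)=t^{k+1}-(q+1)t^{k}+(q-1)t^{k-1}+1$ also annihilates it; reading off the coefficients of $h_{q,k}$ gives precisely the three-term-with-tail recurrence in \eqref{eq07}. However, to pin down the exact range $n\geq 3$ and to treat the (otherwise degenerate) case $k=2$ uniformly, I would carry out the subtraction by hand.

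First I would write \eqref{eq04} at the two consecutive indices $n$ and $n-1$. For $n\geq 3$ both instances are legitimate, the second precisely because then $n-1\geq 2$, and we get
\[
F^{(k)}_{q,n}=qF^{(k)}_{q,n-1}+F^{(k)}_{q,n-2}+F^{(k)}_{q,n-3}+\cdots+F^{(k)}_{q,n-k}
\]
together with
\[
F^{(k)}_{q,n-1}=qF^{(k)}_{q,n-2}+F^{(k)}_{q,n-3}+F^{(k)}_{q,n-4}+\cdots+F^{(k)}_{q,n-k-1}.
\]
Subtracting the second identity from the first, the tails $F^{(k)}_{q,n-3}+\cdots+F^{(k)}_{q,n-k}$ and $F^{(k)}_{q,n-3}+\cdots+F^{(k)}_{q,n-k-1}$ cancel term by term, leaving only $F^{(k)}_{q,n-2}$ from the first sum and $-F^{(k)}_{q,n-k-1}$ from the second, plus $qF^{(k)}_{q,n-1}-qF^{(k)}_{q,n-2}$. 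Hence
\[
F^{(k)}_{q,n}-F^{(k)}_{q,n-1}=qF^{(k)}_{q,n-1}-(q-1)F^{(k)}_{q,n-2}-F^{(k)}_{q,n-k-1},
\]
and moving $F^{(k)}_{q,n-1}$ to the right-hand side yields \eqref{eq07}. When $k=2$ the two cancelling tails are empty, and the same computation still gives $F^{(k)}_{q,n}=(q+1)F^{(k)}_{q,n-1}-(q-1)F^{(k)}_{q,n-2}-F^{(k)}_{q,n-3}$, which agrees with $n-k-1=n-3$.

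There is no real obstacle here beyond bookkeeping; the only point that genuinely needs attention is the index range. Evaluating \eqref{eq04} at $n-1$ requires $n-1\geq 2$, which is exactly why \eqref{eq07} is asserted only for $n\geq 3$ (and not for $n=2$); moreover for such $n$ every index occurring is at least $n-k-1\geq 2-k=-(k-2)$, so all terms lie within the defined range of the sequence. I would also remark that this identity will be the workhorse for the estimates in Theorem~\ref{theo1}(b), since it expresses $F^{(k)}_{q,n}$ with a small, controllable correction term $-F^{(k)}_{q,n-k-1}$.
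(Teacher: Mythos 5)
Your proposal is correct and is essentially the paper's argument: the paper obtains \eqref{eq07} by noting that $\Phi_{q,k}(t)$ divides $h_{q,k}(t)=(t-1)\Phi_{q,k}(t)$, so the sequence also satisfies the order-$(k+1)$ recurrence with characteristic polynomial $h_{q,k}$, and your subtraction of the recurrence \eqref{eq04} at $n$ and $n-1$ is exactly this multiplication by $(t-1)$ carried out explicitly. Your additional care with the index range ($n\geq 3$ so that the shifted recurrence and the term $F^{(k)}_{q,n-k-1}$ are legitimate) is a correct and welcome detail.
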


	Now, we can use Theorem \ref{theo2} and the fact that, for $-(k-2)\leq n\leq 0$, $F^{(k)}_{q,n}=0$ to obtain the first terms in the sequences of $(q,k)$-generalized Fibonacci numbers. Let $q\geq 3$ be a integer number and $(U_{q,n})_{n\geq 1}$ the sequence given by recurrence 
	\[
	U_{q,n}=(q+1)U_{q,n-1}-(q-1)U_{q,n-2}
	\]
	for all $n\geq 3$ with $U_{q,1}=1$ and $U_{q,2}=q$. %$U_{1}=1$ and $U_2=q$. 
	Then we have that
	
	\begin{equation*}
		U_{q,n}=\frac{((q-3)+\sqrt{q^2-2q+5})\alpha^{n}_q+((3-q)+\sqrt{q^2-2q+5})\beta^{n}_q}{2(q-1)\sqrt{q^2-2q+5}}
	\end{equation*}
	for all integer number $ n\geq 1$, where $\alpha_q$ and $\beta_q$ are the roots of $$t^2-(q+1)t+(q-1)=0,$$ given by 
	\[	
	\alpha_q=\frac{(q+1)+\sqrt{q^2-2q+5}}{2},\ \beta_q=\frac{(q+1)-\sqrt{q^2-2q+5}}{2}.
	\]
	
	In what follows, it will be important to observe that $q<\alpha_q<q+1$ and $0<\beta_q<1$, for all integer number $q\geq 3$. Moreover, let us consider the sequence given by recurrence 
	$$V_{q,n}=(q+1)V_{q,n-1}-(q-1)V_{q,n-2}$$
	for all $n\geq 3$ with $V_{q,1}=1$ and $V_{q,2}=q+1$. So, it is not difficult to prove that $F^{(k)}_{q,n}\leq U_{q,n}$, more precisely, we have the following theorem which can be prove by induction
	
	\begin{theorem}\label{theo3}
		Let $k\geq 2$ and $q\geq 3$ be integer number. Then
		\begin{equation}\label{eq08}
			F^{(k)}_{q,n}=U_{q,n} 
		\end{equation}
		for all $1\leq n\leq k+1$ and
		\begin{equation}\label{eq09}
			F^{(k)}_{q,n}=U_{q,n}-\sum_{j=1}^{n-k-1}V_{q,j}F_{q,n-k-j}^{(k)}
		\end{equation}
		for all $n\geq k+2$.
	\end{theorem}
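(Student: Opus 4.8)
The plan is to prove both identities by induction on $n$, using Theorem \ref{theo2} as the driving recurrence and the defining recurrences of $(U_{q,n})$ and $(V_{q,n})$ to reorganize the resulting terms. It is convenient to adopt throughout the convention that an empty sum equals $0$; this makes \eqref{eq09} meaningful (and, by \eqref{eq08}, true) already at $n=k+1$, which lets the small cases $n=k+2,k+3$ be handled without a separate argument.

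First I would establish \eqref{eq08} by induction on $n$ in the range $1\le n\le k+1$. For $n=1$ the initial condition gives $F^{(k)}_{q,1}=1=U_{q,1}$, and for $n=2$ the defining recurrence \eqref{eq04} together with $F^{(k)}_{q,0}=\cdots=F^{(k)}_{q,2-k}=0$ gives $F^{(k)}_{q,2}=q=U_{q,2}$. For $3\le n\le k+1$ the index $n-k-1$ lies in $\{-(k-2),\dots,0\}$, so $F^{(k)}_{q,n-k-1}=0$ and Theorem \ref{theo2} collapses to $F^{(k)}_{q,n}=(q+1)F^{(k)}_{q,n-1}-(q-1)F^{(k)}_{q,n-2}$, which is exactly the recurrence defining $(U_{q,n})$; combined with the two instances of the inductive hypothesis, this yields \eqref{eq08}.

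Next I would prove \eqref{eq09} by induction on $n\ge k+2$. For the base case $n=k+2$, Theorem \ref{theo2} gives $F^{(k)}_{q,k+2}=(q+1)F^{(k)}_{q,k+1}-(q-1)F^{(k)}_{q,k}-F^{(k)}_{q,1}$, and by \eqref{eq08} and the $U$-recurrence this equals $U_{q,k+2}-1=U_{q,k+2}-V_{q,1}F^{(k)}_{q,1}$, which is \eqref{eq09} at $n=k+2$. For the inductive step with $n\ge k+3$, apply Theorem \ref{theo2} and substitute the inductively known expressions \eqref{eq09} for $F^{(k)}_{q,n-1}$ and for $F^{(k)}_{q,n-2}$ (using the empty-sum convention when $n-2=k+1$); the three ``$U$'' contributions combine through $U_{q,n}=(q+1)U_{q,n-1}-(q-1)U_{q,n-2}$ into $U_{q,n}$, leaving
\[
-(q+1)\sum_{j=1}^{n-k-2}V_{q,j}F^{(k)}_{q,n-k-1-j}+(q-1)\sum_{j=1}^{n-k-3}V_{q,j}F^{(k)}_{q,n-k-2-j}-F^{(k)}_{q,n-k-1}.
\]
Shifting the summation index ($i=j+1$ in the first sum, $i=j+2$ in the second) groups everything according to the common term $F^{(k)}_{q,n-k-i}$, whose coefficient is $-\bigl((q+1)V_{q,i-1}-(q-1)V_{q,i-2}\bigr)=-V_{q,i}$ for $i\ge3$ by the $V$-recurrence, equals $-(q+1)V_{q,1}=-(q+1)=-V_{q,2}$ for $i=2$, and equals $-V_{q,1}$ for $i=1$, the last coming from the isolated term $-F^{(k)}_{q,n-k-1}$ of Theorem \ref{theo2}. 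Summing over $1\le i\le n-k-1$ produces $U_{q,n}-\sum_{i=1}^{n-k-1}V_{q,i}F^{(k)}_{q,n-k-i}$, which is \eqref{eq09}.

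No single step is difficult; the work is entirely bookkeeping, and the one place to be careful is the boundary. The $i=1$ and $i=2$ terms of the final sum are produced not by the $V$-recurrence but by the isolated term of Theorem \ref{theo2} and by the initial values $V_{q,1}=1$, $V_{q,2}=q+1$, so these must be matched by hand; and one should confirm that substituting \eqref{eq09} into the inductive step remains legitimate when $n-1$ or $n-2$ still lies in $\{1,\dots,k+1\}$ (i.e.\ $n\in\{k+2,k+3\}$) — which is precisely why it pays to set up the empty-sum convention at the outset so that \eqref{eq08} and \eqref{eq09} become a single formula.
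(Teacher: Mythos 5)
Your proof is correct, and it follows exactly the route the paper intends: the paper states Theorem \ref{theo3} without details, remarking only that it ``can be proved by induction,'' and your induction via Theorem \ref{theo2} together with the $U$- and $V$-recurrences (with careful handling of the boundary cases $n\in\{k+2,k+3\}$ and the index shifts giving the coefficients $-V_{q,i}$) supplies precisely that argument.
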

	
	%Note that $F^{(k)}_{q,n}\leq U_{q,n}$, where the equality holds if and only if $1\leq n\leq k+1$. We remark that the Theorems \ref{theo2} and \ref{theo3} have analogues in the cases $q=1$ and $q=2$. In particular, if $q=2$, then $U_{2,n}=F_{2n-1}$ and $V_{2,n}=F_{2n}$, we refer the reader to  \cite{bravo2021generalization,dresden}.
	
	Another important fact about the sequence $(F_{q,n}^{(k)})_{n\geq0}$ is that its generating function
	\[
	f_{q,k}(x)=\sum_{n=0}^{\infty} F^{(k)}_{q,n}x^n,
	\]
	is given by
	\begin{equation}
		f_{q,k}(x)=\frac{x}{1-qx-x^2-\ldots-x^k}.
	\end{equation}
%then by recurrence \eqref{eq04} we have that $f_{q,k}(x)(1-qx-x^2-\ldots-x^{k})=x$.

	%The results that will be prove in the next section extend some of the most important proprieties of the sequences already mentioned in this paper to the sequences of  $(q,k)$-generalized Fibonacci numbers. In fact, the Binet-style formulas and the study of asymptotic behavior of dominant root of characteristic equation are fundamental in several results about the sequence of Fibonacci numbers and its generalization. In particular, these results are important to study Diophantine equation involving these sequences using Baker's method on linear forms in logarithms of algebraic numbers. In what follows, we shall prove that the main properties of these sequences can be naturally extended to all sequences of $(q,k)$-generalized Fibonacci numbers.
	
	In the next section, in order to prove the Main Theorem, we shall determine the Binet-style formulas and study the asymptotic behavior of dominant root of characteristic equation to the sequences of $(q,k)$-generalized Fibonacci numbers.
	
	\section{Proof of Main Theorem}

	\subsection{Binet-Style Formula.}
		Kalman in \cite{kalman1982generalized} proved that if $(u_n)_{n\geq0}$ is a linear recurrence sequence of order $k\geq2$  satisfies the recurrence  
		\[
		u_{n+k}=c_{k-1}u_{n+k-1}+c_{k-2}u_{n+k-2}+\cdots+c_1u_{n+1}+c_0u_{n}
		\]
		for all $n\geq0$, with initial condition $u_0=u_1=\cdots=u_{k-2}=0$ and $u_{k-1}=1$, where $c_0,c_1,\ldots,c_{k-1}$ are constant. Then
		\[
		u_n=\sum_{i=1}^{k}\frac{\alpha_i^n}{P'(\alpha_i)}
		\]
		where $P(t)=t^k-c_{k-1}t^{k-1}-\cdots-c_1t-c_0$ is the characteristic polynomial of $(u_n)_{n\geq0}$ and $\alpha_1,\alpha_2,\ldots,\alpha_k$ are the roots of $P(t)$. Taking $u_n=F^{(k)}_{q,n-(k-2)}$ for all $n\geq0$, we have that $P(t)=\Phi_{q,k}(t)$ and 
		
			\[
			F^{(k)}_{q,n}=\sum_{i=1}^{k}\frac{\gamma_i^{n+(k-2)}}{\Phi'_{q,k}(\gamma_i)}
			\]
		%\[
		%F^{(k)}_{q,n}=\sum_{i=1}^{k}\frac{\gamma_i^n}{\Phi'_{q,k}(\gamma_i)}
		%\]
		where $\gamma_1,\gamma_2,\ldots,\gamma_k$ are the roots of $\Phi_{q,k}(t)$. Now, consider the auxiliary function \eqref{eq06} given by 
		\[
		h_{q,k}(t)=(t-1)\Phi_{q,k}(t)=t^{k+1}-(q+1)t^{k}+(q-1)t^{k-1}+1.
		\]
		Using that 
		\[
		\Phi'_{q,k}(t)=\frac{h'_{q,k}(t)(t-1)-h_{q,k}(t)}{(t-1)^2}
		\]
		and $h_{q,k}(\gamma_i)=0$ for all $0\leq i\leq k$, we obtain
		\begin{eqnarray*}
			\Phi'_{q,k}(\gamma_i)&=&\frac{(k+1)\gamma_i^k-(q+1)k\gamma_i^{k-1}+(q-1)(k-1)\gamma_i^{k-2}}{\gamma_i-1}.
		\end{eqnarray*}
		Hence,
		\[
		F_{q,n}^{(k)}=\sum_{i=1}^kg_{q,k}(\gamma_i)\gamma_i^n
		\]
		where
		\[
		g_{q,k}(x):=\frac{x-1}{(k+1)x^2-(q+1)kx+(q-1)(k-1)},
		\]
		and this proves the item (a) of Theorem \ref{theo1}.
	
	\subsection{Asymptotic Behavior}
	For integers numbers $k\geq 2$, $q\geq 3$ and $n\geq2-k$, we define $E^{(k)}_{q,n}$ as the error of the approximation of the $n$th $(q,k)$-generalized Fibonacci number with the dominant term of the Binet-style formula of $F^{(k)}_{q,n}$, i.e.,
	\begin{equation}\label{eq3.4}
		E^{(k)}_{q,n}=F^{(k)}_{q,n}-g_{q,k}(\gamma)\gamma^{n},
	\end{equation}
	for $\gamma$ the dominant root of $\Phi_{q,k}$.
	
	We remark that given a polynomial $f(x)\in\mathbb{R}[x]$, the set of all possible linear recurrence sequences of real numbers having the characteristic equation $f(x)=0$ is a vector space over real numbers. Since $F^{(k)}_{q,n}$ and $(\gamma^n)_{n\geq 0}$ satisfy the characteristic equation $\Phi_{q,k}(x)=0$, it follows from \eqref{eq3.4} that $E^{(k)}_{q,n}$ satisfies the same recurrence relation as $(q,k)$-generalized Fibonacci sequence. That is, for all integer $k\geq2$ we have that
		\[
	E_{q,n}^{(k)}=qE_{q,n-1}^{(k)}+E_{q,n-2}^{(k)}+\cdots+E_{q,n-k}^{(k)}\qquad\text{for all}\quad n\geq2,
	\]
	moreover,
    \begin{equation}\label{eq2.7}
		E_{q,n}^{(k)}=(q+1)E_{q,n-1}^{(k)}-(q-1)E_{q,n-2}^{(k)}-E_{q,n-k-1}^{(k)}.
	\end{equation}
	
	Using the fact that $\lim_{n\to\infty}|\gamma_i|^n=0$ for $2\leq i\leq k$ and taking into account that
	\[
	|E_{q,n}^{(k)}|\leq \sum_{j=2}^{k}|g_{q,k}(\gamma_j)||\gamma_j|^n,
	\]
	we also deduce that
	\begin{equation}\label{eq2.8}
		\lim_{n\to\infty}|E_{q,n}^{(k)}|=0.
	\end{equation}

		Our aim is to estimate $|E_{q,n}^{(k)}|$. For this, we remark that  Wu and Zang in \cite{wu2013reciprocal} proved that for all integer numbers $a_1\geq a_2\geq\cdots\geq a_m\geq1$ with $m\geq2$ the polynomial 
		\[
		f(x)=x^m-a_1x^{m-1}-a_2x^{m-2}-\cdots-a_1x-a_m
		\]  
		has exactly one positive real zero $\alpha$ with $a_1<\alpha<a_1+1$ and the others $m-1$ zeros of $f(x)$ lie within the unit circle in the complex plane. Thus, for all $q\geq3$ the characteristic polynomial $\Phi_{q,k}(t)$ has a dominant root $q<\gamma<q+1$ and the others roots are into the unit circle. 
		
		In what follows, fixed an integer number $q\geq 3$, we will indicate by $\gamma_{k}$ the dominant root of $\Phi_{q,k}(t)$. In order to prove the item (b) of Theorem \ref{theo1}, we shall introduce some auxiliary results about $\gamma_{k}$.
		
		\begin{lemma}\label{lemma1}
			Let $q\geq 3$ be an integer number fixed, $\gamma_{l}$ and $\gamma_{k}$ the dominant roots of $\Phi_{q,l}(t)$ and $\Phi_{q,k}(t)$, respectively. Then
			\begin{enumerate}[(i)]
				\item for $l>k$, we have that $\gamma_{l}>\gamma_{k}$;
				\item if $\alpha_q$ is the dominant root of $t^2-(q+1)t+(q-1)=0$, then $$\alpha_q\left(1-\frac{1}{q^k}\right)<\gamma_{k}<\alpha_q.$$
			\end{enumerate}
		\end{lemma}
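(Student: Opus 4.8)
The plan is to handle the two parts separately; both reduce to elementary sign analysis of $\Phi_{q,k}$ and of the quadratic $g_q(t):=t^2-(q+1)t+(q-1)$, whose roots are $\alpha_q,\beta_q$ with $0<\beta_q<1<q<\alpha_q<q+1$, while the dominant roots $\gamma_k$ satisfy $q<\gamma_k<q+1$ by the Wu--Zang result quoted above.

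For part (i) the key is the telescoping identity
\[
\Phi_{q,k+1}(t)=t\,\Phi_{q,k}(t)-1,
\]
which follows at once from \eqref{eq05}. Evaluating at $t=\gamma_k$ gives $\Phi_{q,k+1}(\gamma_k)=-1<0$, and then by induction $\Phi_{q,l}(\gamma_k)<0$ for every $l>k$, since each step multiplies a negative number by $\gamma_k>q>0$ and then subtracts $1$. Now $\Phi_{q,l}(t)\to+\infty$ as $t\to+\infty$ and $\Phi_{q,l}(\gamma_k)<0$ with $\gamma_k>q>1$, so by the intermediate value theorem $\Phi_{q,l}$ has a root in $(\gamma_k,+\infty)$; by Wu--Zang this is the \emph{only} real root of $\Phi_{q,l}$ exceeding $1$, hence it is $\gamma_l$, and therefore $\gamma_l>\gamma_k$.

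For part (ii) I would use the factorization $h_{q,k}(t)=t^{k-1}g_q(t)+1$, read off directly from \eqref{eq06}. Since $\gamma_k>q>1$ we have $\gamma_k\neq 1$, so $h_{q,k}(\gamma_k)=(\gamma_k-1)\Phi_{q,k}(\gamma_k)=0$ yields $g_q(\gamma_k)=-\gamma_k^{-(k-1)}<0$. As $g_q$ is negative exactly on $(\beta_q,\alpha_q)$ and $\gamma_k>q>\beta_q$, this already gives $\gamma_k<\alpha_q$. For the lower bound, set $\delta:=\alpha_q-\gamma_k$, so $0<\delta<1$ (using $\gamma_k>q$ and $\alpha_q<q+1$). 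Expanding $g_q(\alpha_q-\delta)$ and using $g_q(\alpha_q)=0$ gives $g_q(\gamma_k)=-\delta\bigl(2\alpha_q-q-1-\delta\bigr)$, hence
\[
\frac{1}{\gamma_k^{\,k-1}}=\delta\bigl(2\alpha_q-q-1-\delta\bigr).
\]
Because $2\alpha_q-q-1-\delta>2q-q-1-1=q-2\ge 1$, we get $\delta<\gamma_k^{-(k-1)}<q^{-(k-1)}=q\cdot q^{-k}<\alpha_q q^{-k}$, so $\gamma_k=\alpha_q-\delta>\alpha_q\bigl(1-q^{-k}\bigr)$.

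I expect both arguments to be essentially routine bookkeeping. The only points needing care are the appeal to the Wu--Zang theorem to ensure the root produced by the intermediate value theorem in part (i) is genuinely the dominant one, and, in part (ii), keeping the elementary inequalities $q<\gamma_k<q+1$ and $q<\alpha_q<q+1$ straight so that the estimate $2\alpha_q-q-1-\delta\ge 1$ is valid for every $q\ge 3$; this last inequality is what forces the hypothesis $q\ge 3$ and is the most delicate step.
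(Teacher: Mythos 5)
Your proposal is correct, and it splits naturally into two comparisons. For part (ii) you are essentially reproducing the paper's argument: the paper also writes $h_{q,k}(t)=t^{k-1}\bigl(t^2-(q+1)t+(q-1)\bigr)+1$, deduces $\gamma_k^2-(q+1)\gamma_k+(q-1)=-\gamma_k^{-(k-1)}$, and subtracts this from $\alpha_q^2-(q+1)\alpha_q+(q-1)=0$ to get $(\alpha_q-\gamma_k)\bigl(\alpha_q+\gamma_k-(q+1)\bigr)=\gamma_k^{-(k-1)}$, which is exactly your identity with $\delta=\alpha_q-\gamma_k$; the only difference is bookkeeping (the paper bounds the second factor below by $q/\alpha_q$ and concludes $\alpha_q-\gamma_k<\alpha_q q^{-k}$, you bound it below by $q-2\ge 1$ and pass through $\delta<q^{-(k-1)}<\alpha_q q^{-k}$), and the paper gets $\gamma_k<\alpha_q$ from the sign change $\Phi_{q,k}(q)<0<\Phi_{q,k}(\alpha_q)$ rather than from the sign of the quadratic at $\gamma_k$, which is an equivalent observation. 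For part (i) your route is genuinely different: the paper argues by contradiction, dividing $\Phi_{q,k}(\gamma_k)=0$ by $\gamma_k^{k}$ to get $1=q/\gamma_k+\gamma_k^{-2}+\cdots+\gamma_k^{-k}$ and comparing reciprocal sums term by term, whereas you use the telescoping identity $\Phi_{q,k+1}(t)=t\,\Phi_{q,k}(t)-1$ (which is easily checked), conclude $\Phi_{q,l}(\gamma_k)<0$ for all $l>k$ by induction, and then invoke the intermediate value theorem together with the Wu--Zhang uniqueness of the root exceeding $1$ to place $\gamma_l$ in $(\gamma_k,\infty)$. Both proofs are valid; the paper's is self-contained monotonicity of the reciprocal sum, while yours leans on the Wu--Zhang theorem (already quoted in the paper, and also used there to know $q<\gamma_k<q+1$, so this is a legitimate appeal) and has the small advantage of producing the quantitative fact $\Phi_{q,l}(\gamma_k)\le -1$. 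Your closing remark that $q\ge 3$ is forced by the estimate $2\alpha_q-q-1-\delta\ge 1$ is only a constraint of your particular bookkeeping, not of the lemma itself, but this does not affect correctness under the stated hypothesis.
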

		
		\begin{proof}
			In order to prove the item (i), we proceed by contradiction. Let us assuming that $\gamma_{k}\geq\gamma_{l}$. Thus, $\gamma_{k}^{-i}\leq \gamma_{l}^{-i}$ holds for all $i\geq 1$. Taking into account that $\Phi_{q,k}(\gamma_{q,k})=0$, we get
			\[
			\gamma_{k}^k=q\gamma_{k}^{k-1}+\gamma_{k}^{k-2}+\cdots+\gamma_{k}+1,
			\]
			and the same conclusion remains valid for $\gamma_l$. Since $k<l$ we have that 
			\begin{eqnarray*}
				1&=&\dfrac{q}{\gamma_{k}}+\dfrac{1}{\gamma_{k}^2}+\cdots+\dfrac{1}{\gamma_{k}^k}\\
				&<&\dfrac{q}{\gamma_{l}}+\dfrac{1}{\gamma_{l}^2}+\cdots+\dfrac{1}{\gamma_{l}^k}+\dfrac{1}{\gamma_{l}^{k+1}}+\cdots+\dfrac{1}{\gamma_{l}^l}=1
			\end{eqnarray*}
			which is a contradiction. Thus, we conclude that $\gamma_{l}>\gamma_{k}$ and this proves the item (i).
			
			Let us consider the item (ii). Note that the auxiliary function given by \eqref{eq06} can be writing in the form
			\[
			h_{q,k}(t)=t^{k-1}(t^2-(q+1)t+(q-1))+1,
			\]
			  moreover,  $\alpha_q$ is a root of $t^2-(q+1)t+(q-1)$. Thus, $h_{q,k}(\alpha_q)=1$ and 
			\[
			\Phi_{q,k}(\alpha_q)=\frac{h_{q,k}(\alpha_q)}{\alpha_q-1}=\frac{1}{\alpha_q-1}>0.
			\]
			In the other hand, we have that $\alpha_q>q$ and 
			\[
			\Phi_{q,k}(q)=-q^{k-2}-q^{k-3}-\cdots-q-1<0.
			\]
			Since $\gamma_k$ is the only root of $\Phi_{q,k}(x)$ such that $\gamma_{k}>1$, we obtain that $q<\gamma_{k}<\alpha_q$.% Now, we use once more the fact that
			%$$h_{q,k}(t)=t^{k-1}(t^2-(q+1)t+(q-1))+1.$$
			
			By hypothesis, $$\alpha_q^2-(q+1)\alpha_q+(q-1)=0,$$
			and evaluating $h_{q,k}(t)$ at $\gamma_{k}$, we have  $h_{q,k}(\gamma_{k})=0$ and then 
			\begin{eqnarray*}
				\gamma_{k}^2-(q+1)\gamma_{k}+(q-1)= \frac{-1}{\gamma_{k}^{k-1}}.
			\end{eqnarray*}
			Subtracting the two expressions above and rearranging some terms, we obtain
			\[
			(\alpha_q-\gamma_{k})(\alpha_q+\gamma_{k}-(q+1))=\frac{1}{\gamma_{k}^{k-1}}.
			\]
			Since $\alpha_q>\gamma_{k}>q$ and $(\alpha_q+\gamma_{k}-(q+1))>q/\alpha_q$, we get $\alpha_q-\gamma_{k}<\alpha_qq^{-k}$. Hence, 
			\[
			\gamma_{k}>\alpha_q\left(1-\frac{1}{q^k}\right)
			\]
			and this concludes the proof of (ii).
		\end{proof}

		Now, let us consider that $q\geq3$ and study the rational function $g_{q,k}$ that appears in the Binet-style formula given by
		\[
		g_{q,k}(x)=\frac{x-1}{(k+1)x^2-(q+1)kx+(q-1)(k-1)}.
		\]
		Since $\alpha_q$ is a root of $x^2-(q+1)x+(q-1)$, we get
		\[
		g_{q,k}(\alpha_q)=\frac{\alpha_q-1}{\alpha_q^2-(q-1)}.
		\]
		In particular, after some calculations, we have the estimate 
            \begin{equation}\label{eq34}
                \frac{1}{q+1}<g_{q,k}(\alpha_q)<\frac{1}{q},
            \end{equation}
            where we use that $3\leq q<\alpha_q<q+1$. Moreover, we remark that $g_{q,k}$ has vertical asymptote in 
		\begin{equation}\label{eq35}
		    c_{q,k}:=\frac{(q+1)k+\sqrt{k^2(q^2-2q+5)+4(q-1)}}{2(k+1)}
		\end{equation}
		and is positive and continuous in $(c_{q,k},\infty)$. Further,
		\[
		g_{q,k}'(x)=-\frac{(k+1)(x-1)^2+q+k-2}{[(k+1)x^2-(q+1)kx+(q-1)(k-1)]^2},
		\]
		
		is negative in $(c_{q,k},\infty)$, so $g_{q,k}(x)$ is decreasing in the same interval. Using this we can prove the following technical lemma.
		
		\begin{lemma}\label{lemma2}
			Let $\gamma_k$ be a dominant root of $\Phi_{q,k}(t)$, $k\geq2$ and $q\geq3$ integers. Then
			\[
			\frac{1}{q+1}<g_{q,k}(\gamma_{k})<\frac{1}{q}.
			\]
		\end{lemma}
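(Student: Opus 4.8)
The plan is to bypass the monotonicity discussion for $g_{q,k}$ and argue directly from the defining equation of $\gamma_k$. Write $D_{q,k}(x):=(k+1)x^2-(q+1)kx+(q-1)(k-1)$ for the denominator of $g_{q,k}$, so that $g_{q,k}(\gamma_k)=(\gamma_k-1)/D_{q,k}(\gamma_k)$. Once we know $D_{q,k}(\gamma_k)>0$, the lemma reduces to the two polynomial inequalities $(q+1)(\gamma_k-1)>D_{q,k}(\gamma_k)$ and $q(\gamma_k-1)<D_{q,k}(\gamma_k)$, obtained by clearing denominators.

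The key input is the factorization already used in the proof of Lemma~\ref{lemma1}, namely $h_{q,k}(t)=t^{k-1}\bigl(t^2-(q+1)t+(q-1)\bigr)+1$. Since $h_{q,k}(\gamma_k)=0$, setting $\delta:=\gamma_k^2-(q+1)\gamma_k+(q-1)$ yields the identity $\delta=-\gamma_k^{-(k-1)}$. Because $\gamma_k>q\geq 3$ by Lemma~\ref{lemma1}(ii), this gives $-1<(k+1)\delta<0$; indeed $(k+1)|\delta|=(k+1)/\gamma_k^{k-1}<(k+1)/q^{k-1}\leq (k+1)/3^{k-1}\leq 1$. Substituting $\gamma_k^2=(q+1)\gamma_k-(q-1)+\delta$ into $D_{q,k}(\gamma_k)$ and simplifying collapses the expression to $D_{q,k}(\gamma_k)=(q+1)\gamma_k-2(q-1)+(k+1)\delta$, which, using $\gamma_k>q$ and the bound on $(k+1)\delta$, is at least $q^2-q+1>0$. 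This settles positivity of the denominator.

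With these facts the two target inequalities are mechanical: $(q+1)(\gamma_k-1)-D_{q,k}(\gamma_k)=(q-3)-(k+1)\delta>0$ since $q\geq 3$ and $\delta<0$; and $D_{q,k}(\gamma_k)-q(\gamma_k-1)=(\gamma_k-q)+2+(k+1)\delta>1>0$ since $\gamma_k>q$ and $(k+1)\delta>-1$. Dividing through by the positive number $D_{q,k}(\gamma_k)$ gives $\frac{1}{q+1}<g_{q,k}(\gamma_k)<\frac{1}{q}$, as claimed.

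I do not expect a genuine obstacle; the only point requiring a moment's care is the elementary estimate $(k+1)/3^{k-1}\leq 1$ for all $k\geq 2$ (with equality only at $k=2$), which is exactly where the hypothesis $q\geq 3$ is used — combined with the strict inequality $\gamma_k>q$ it forces $(k+1)|\delta|<1$, keeping all the estimates strict. One could alternatively follow the route suggested by the paragraph before the lemma (note that $D_{q,k}(\gamma_k)>0$ together with $\gamma_k>q$ lying past the vertex of $D_{q,k}$ already places $\gamma_k$ in the interval $(c_{q,k},\infty)$ where $g_{q,k}$ decreases, so that $\gamma_k<\alpha_q$ and \eqref{eq34} give the lower bound, while $\gamma_k>\alpha_q(1-q^{-k})$ plus a control on the growth of $g_{q,k}$ near $\alpha_q$ gives the upper bound), but the direct computation above is shorter and self-contained.
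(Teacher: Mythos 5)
Your argument is correct, and it is genuinely different from the paper's proof. The paper establishes the lemma by analyzing the rational function $g_{q,k}$ itself: it locates the vertical asymptote $c_{q,k}$, shows $g_{q,k}$ is decreasing on $(c_{q,k},\infty)$, and then squeezes $\gamma_k$ between $\alpha_q(1-q^{-k})$ and $\alpha_q$ (Lemma~\ref{lemma1}), which forces a three-way case split ($k=2$, $3\leq k\leq q$, $k\geq q+1$) and the rather heavy evaluations $G_{q,k}$ and $\overline{G}_{q,k}$ at the comparison points $\alpha_q-1/(q(q-1))$ and $\alpha_q-1/(k(k+1))$. You instead use the relation $h_{q,k}(\gamma_k)=0$ to replace $\gamma_k^2$ by $(q+1)\gamma_k-(q-1)+\delta$ with $\delta=-\gamma_k^{-(k-1)}$, which linearizes the denominator to $(q+1)\gamma_k-2(q-1)+(k+1)\delta$; after clearing denominators the two bounds become the exact identities $(q+1)(\gamma_k-1)-D_{q,k}(\gamma_k)=(q-3)-(k+1)\delta$ and $D_{q,k}(\gamma_k)-q(\gamma_k-1)=(\gamma_k-q)+2+(k+1)\delta$, and the estimate $(k+1)/\gamma_k^{k-1}<(k+1)/3^{k-1}\leq1$ (valid for all $k\geq2$) finishes it. I verified the algebra; the only inputs you need are $\gamma_k>q$ (which in the paper comes from the Wu--Zhang remark and is reproved inside Lemma~\ref{lemma1}(ii), so your citation of Lemma~\ref{lemma1}(ii) is slightly loose but harmless) and $q\geq3$, which enters exactly at the term $q-3\geq0$ — the same place the paper's restriction to $q\geq3$ bites. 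Your route buys uniformity in $k$ (no case analysis), shorter computations, and it dispenses entirely with the monotonicity/asymptote discussion and with the lower bound of Lemma~\ref{lemma1}(ii); the paper's route, while longer, keeps the geometric picture of $g_{q,k}$ near $\alpha_q$, which it reuses informally when discussing the asymptotics of the dominant term.
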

		
		\begin{proof}
		In order to prove this lemma, we consider three cases. Firstly, we consider the case $k=2$. In this case we have that
			\[
			g_{q,2}(x)=\frac{x-1}{3x^2-2(q+1)x+(q-1)}
			\]
			and $\gamma_2$ is the biggest root of $t^2-qt-1=0$ given by $\gamma_2=(q+\sqrt{q^2+4})/2$. Since $q\geq3$, we get 
			\[
			\frac{1}{q+1}<g_{q,2}(\gamma_2)<\frac{1}{q}.
			\]
            
               Now, let us consider the the case $3\leq k\leq q$. Here we use the value $c_{q,k}$ defined in \eqref{eq35} and we remark that it can be rewritten as
			\[
			c_{q,k}=\frac{(q+1)+\sqrt{q^2-2q+5+4k^{-2}(q-1)}}{2}\left(1-\frac{1}{k+1}\right).
			\]
			Thus, it is easily seen that $c_{q,k}\to \alpha_q$ as $k\to \infty$. Since 
			$$\sqrt{q^2-2q+5+4k^{-2}(q-1)}< \sqrt{q^2-2q+5}+\frac{2\sqrt{q-1}}{k},$$ we get
			\begin{equation}\label{eq13}
			   c_{q,k}<\left(\alpha_q+\frac{\sqrt{q-1}}{k}\right)\left(1-\frac{1}{k+1}\right). 
			\end{equation}

                Therefore, we obtain that
			\[
			c_{q,k}<\alpha_q-\frac{1}{q(q-1)}<\alpha_q\left(1-\frac{1}{q^{k}}\right)<\gamma_{k}<\alpha_q,
			\]
			where the last inequalities are given by Lemma \ref{lemma1}.
			
			Denoting by $G_{q,k}$ the value $g_{q,k}(\alpha_q-1/(q(q-1)))$,	since $g_{q,k}(x)$ is decreasing in $(c_{q,k},\infty)$ and by \eqref{eq34}, we have that
			\[
			\frac{1}{q+1}<g_{q,k}(\alpha_q)<g_{q,k}(\gamma_{k})< G_{q,k}.
			\]
			Thus, after some calculations, we obtain that 
			\begin{eqnarray*}
		G_{q,k}&=&\frac{\alpha_q-1-\frac{1}{q(q-1)}}{q\alpha_q+(\alpha_q-q)-q+2\left(1-\frac{\alpha_q(k+1)}{q(q-1)}\right)+\frac{k(q+1)}{q(q-1)}+\frac{k+1}{q^2(q-1)^2}}\\
		&<&\frac{\alpha_q-1-\frac{1}{q(q-1)}}{q\left(\alpha_q-1-\frac{1}{q(q-1)}\right)},
			\end{eqnarray*}
		where we use that $q\geq k\geq3$ and $\alpha_q=(q+1+\sqrt{q^2-2q+5})/2$. Hence, we get $g_{q,k}(\gamma_{k})<1/q$.
			
			Finally, we consider the case $k\geq q+1$ and observe that by \eqref{eq13} we have that  
			\[
			c_{q,k}<\alpha_q-\frac{1}{k(k+1)}.
			\]
			Since $k\geq q+1\geq \alpha_q$ and $q\geq 3$, we get $k(k+1)\alpha_q< q^k$. Moreover, using that $k^2(k+1)<3^k$ for all $k\geq4$, we obtain
			\[
			c_{q,k}<\alpha_q-\frac{1}{k(k+1)}<\alpha_q\left(1-\frac{1}{q^{k}}\right)<\gamma_{k}<\alpha_q.
			\]
			  Thus, we use again the $g_{q,k}(x)$ is decreasing in $(c_{q,k},\infty)$ and we get
			\[
			g_{q,k}(\alpha_q)<g_{q,k}(\gamma_{k})<g_{q,k}\left(\alpha_q-\frac{1}{k(k+1)}\right).
			\]
			In particular, by \eqref{eq34},
			\[
			\frac{1}{q+1}<g_{q,k}(\gamma_{k}).
			\]
			
			In the other hand, let us denote the value $g_{q,k}(\alpha_q-1/(k(k+1)))$ just by $\overline{G}_{q,k}$. Then, we obtain that
			\begin{eqnarray*}
			\overline{G}_{q,k}&=&\frac{\alpha_q-1-\frac{1}{k(k+1)}}{q\alpha_q+(\alpha_q-q)-q+2\left(1-\frac{\alpha_q}{k}\right)+\frac{k^2(q+1)+1}{k^2(k+1)}}\\
			&<&\frac{\alpha_q-1-\frac{1}{k(k+1)}}{q\left(\alpha_q-1-\frac{1}{k(k+1)}\right)},
			\end{eqnarray*}
			 where we use that $k\geq q+1>\alpha_q>q\geq3$. Therefore, we obtain that $g_{q,k}(\gamma_{k})<1/q$ and this completes the proof of the lemma.
		\end{proof}
		
		In order to prove the item (b) of Theorem \ref{theo1} which state that if $q\geq 3$ and $k\geq 2$, then
		\[
		|E_{q,n}^{(k)}|\leq\frac{1}{q}
		\]
		for all $n\geq 2-k$. Note that, by initial conditions of $(F_{q,n}^{(k)})$, we have $F^{(k)}_{q,n}=0$ for all $2-k\leq n\leq 0$. Hence 
		$$E_{q,n}^{k}=-g_{q,k}(\gamma)\gamma^n$$
		for all $2-k\leq n\leq 0$. Now let us suppose $n=0$, in this case, by Lemma \ref{lemma2}, we get 
		$$|E^{(k)}_{q,0}|=g_{q,k}(\gamma)<1/q.$$ 
		Moreover, if $2-k\leq n\leq -1$, then $\gamma^n\leq\gamma^{-1}<1$ and again we obtain that 
		$$g_{q,k}(\gamma)\gamma^n\leq g_{q,k}(\gamma)<1/q$$ 
		for all $k\geq 2$.
		
		By Lemma \ref{lemma2}, we have that $\gamma/(q+1)<g_{q,k}(\gamma)\gamma<\gamma/q$. Using that $q<\gamma<q+1$, we get $1-1/(q+1)<g_{q,k}(\gamma)\gamma<1+1/q$. Thus, if $n=1$, then $F_{q,1}^{(k)}=1$ and 
		\[
		-\frac{1}{q}<1-g_{q,k}(\gamma)\gamma<\frac{1}{q+1}.
		\]
		  Hence, we obtain that $|E_{q,1}^{k}|\leq1/q$. 
		
		Now, for the sake of contradiction, assume that $|E_{q,n}^{(k)}|>1/q$ for some integer $n\geq2$. Let $n_0$ be the smallest positive integer with this property. Since $|E_{q,n_0-1}^{(k)}|\leq1/q$ and $|E_{q,n_0-k}|\leq1/q$ we get $$|(q-1)E_{q,n_0-1}^{(k)}+E_{q,n_0-k}|\leq 1.$$ According to \eqref{eq2.7}  
		\[
		E_{q,n_0+1}^{(k)}=(q+1)E_{q,n_0}^{(k)}-((q-1)E_{q,n_0-1}^{(k)}+E_{q,n_0-k}^{(k)})
		\]
		and so
		\[
		|E_{q,n_0+1}^{(k)}|\geq(q+1)|E_{q,n_0}^{(k)}|-|(q-1)E_{q,n_0-1}^{(k)}+E_{q,n_0-k}^{(k)}|.
		\]
		Hence
		\[
		|E_{q,n_0+1}^{(k)}|-|E_{q,n_0}^{(k)}|\geq q|E_{q,n_0}^{(k)}|-|(q-1)E_{q,n_0-1}^{(k)}+E_{q,n_0-k}^{(k)}|>0
		\]
		giving
		\[
		|E_{q,n_0+1}^{(k)}|>|E_{q,n_0}^{(k)}|.
		\]
		Since $n_0-k+1<n_0$, we infer that $|E^{(k)}_{q,n_0-k+1}|\leq \frac{1}{q}< |E_{q,n_0}^{(k)}| <|E_{q,n_0+1}^{(k)}|$ and therefore $|(q-1)E_{q,n_0}^{(k)}+E^{(k)}_{q,n_0-k+1}|<q|E_{q,n_0+1}^{(k)}|$. Thus,
		\[
		|E_{q,n_0+2}^{(k)}|\geq(q+1)|E_{q,n_0+1}^{(k)}|-|(q-1)E_{q,n_0}^{(k)}+E_{q,n_0-k+1}^{(k)}|
		\]
		and we obtain that $|E_{q,n_0+2}^{(k)}|>|E_{q,n_0+1}^{(k)}|$.
		
		Thus, suppose that $|E^{(k)}_{q,n_0}|<|E^{(k)}_{q,n_0+1}|<\cdots<|E^{(k)}_{n_0+i-1}|$ for some integer $i\geq 4$. We have two cases according to whether $n_0+i-k-1<n_0$ or $n_0\leq n_0+i-k-1$. First, if $n_0+i-k-1<n_0$, then we get
		\[
		|E_{q,n_0+i-k-1}^{(k)}|\leq\frac{1}{q}<|E^{(k)}_{q,n_0}|<|E_{q,n_0+1}^{(k)}|<\cdots<|E^{(k)}_{n_0+i-1}|.
		\]
		In the other hand, if $n_0\leq n_0+i-k-1$, then we obtain that 
		$$|E_{q,n_0+i-k-1}^{(k)}|<|E_{q,n_0+i-1}^{(k)}|.$$ 
		In any case, we conclude that $|E_{q,n_0+i-k-1}^{(k)}|<|E_{q,n_0+i-1}^{(k)}|$. For this reason
		\[
		|(q-1)E^{(k)}_{q,n_0+i-2}+E^{(k)}_{q,n_0+i-k-1}|<q|E_{q,n_0+i-1}^{(k)}|.
		\]
		Using \eqref{eq2.7} again, we get
		\begin{eqnarray*}
			|E^{(k)}_{q,n_0+i}|&\geq& (q+1)|E^{(k)}_{n_0+i-1}|-|(q-1)E^{(k)}_{q,n_0+i-2}+E^{(k)}_{q,n_0+i-k-1}|\\
			&>& |E_{q,n_0+i-1}^{(k)}|.  	
		\end{eqnarray*}
		
		Therefore, $|E^{(k)}_{q,n_0}|<|E_{q,n_0+1}^{(k)}|<\cdots<|E^{(k)}_{n_0+i-1}|<|E^{(k)}_{n_0+i}|$ contradicting \eqref{eq2.8} which says that the error must eventually go to $0$. Hence, we conclude that $|E_{q,n}^{(k)}|<1/q$ for all integer $n\geq2-k$.  	
		
	Now, let us consider the exponential growth of $(q,k)$-generalized Fibonacci sequence. Note that, by the first part of item (b) of Theorem \ref{theo1}, if $\gamma=\gamma_{q,k}$ with $q\geq 3$ and $k\geq2$, then
	\[
	g_{q,k}(\gamma)\gamma^n-\frac{1}{q}<F_{q,n}^{(k)}<g_{q,k}(\gamma)\gamma^n+\frac{1}{q}.
	\]
	Since $1/(q+1)<g_{q,k}(\gamma)<1/q$, we obtain that
	\[
	\frac{\gamma^n}{q+1}-\frac{1}{q}<F_{q,n}^{(k)}<\frac{\gamma^n}{q}+\frac{1}{q}.
	\]
	
	Hence, if $k\geq 2$ and $q\geq 3$, then we have that
	\[
	\gamma^{n-2}<\gamma^{n-1}\left(\frac{q-1}{q}\right)<F_{q,n}^{(k)}<\gamma^{n-1}\left(\frac{q+2}{q}\right)<\gamma^{n},
	\]
    for all $n\geq1$, where we use that $q<\gamma<q+1$. Finally, this completes the proof of item (b) of Theorem \ref{theo1}.
    \qed

        %In conclusion, we remark that Bravo, Herreira and Luca in \cite{bravo2} determined the common values of generalized Fibonacci and Pell sequences. Moreover, Alahmadi and Luca in \cite{alahmadi} showed that any roots of $\Phi_{1,k}(X)$ and $\Phi_{2,l}(X)$ are multiplicatively independent confirming a conjecture from Bravo, Herreira and Luca in \cite{bravo2}. In this way, we finished this paper by suggesting two problems.

        %\begin{problem}
         %   Let $q\geq3$ be a positive integer number. Find all common values of $(1,k)$-generalized Fibonacci and $(q,l)$-generalized Fibonacci sequences. 
        %\end{problem}

        %\begin{problem}
         %   Find all positive integer $q\geq3$ such that $\Phi_{1,k}(X)$ and $\Phi_{q,l}(X)$ have  multiplicatively dependent roots.
        %\end{problem}
	
	\bibliographystyle{amsplain}

\end{document}